\newcommand{\incs}{
  \usepackage{amsmath, amsthm, amsfonts, hyperref}

\theoremstyle{plain}

\newtheorem{thm}{Theorem}
\newtheorem{lem}[thm]{Lemma}
\newtheorem{cor}[thm]{Corollary}

\theoremstyle{definition}
\newtheorem{defn}[thm]{Definition}
\newtheorem{exl}[thm]{Example}

\renewcommand{\phi}{\varphi}
\renewcommand{\bar}{\overline}

\DeclareMathOperator{\Eq}{Eq}
\DeclareMathOperator{\Fix}{Fix}
\DeclareMathOperator{\Rem}{Rem}
\DeclareMathOperator{\id}{id}

}
\newcommand{\abs}{
\begin{abstract}
We give an easily checkable algebraic condition which implies that two
elements of 
a finitely generated free group are members of distinct
doubly-twisted conjugacy classes with respect to a
pair of homomorphisms. We further show that this criterion is
satisfied with probability 1 when the homomorphisms and elements are
chosen at random.  
\end{abstract}
}
\begin{document}

\bibliographystyle{hplain}

\title{Typical elements in free groups are in different doubly-twisted conjugacy classes}
\author{P. Christopher Staecker
\thanks{Email: cstaecker@fairfield.edu}
\thanks{Keywords: Nielsen theory, coincidence theory, twisted
  conjugacy, doubly twisted conjugacy, asymptotic density}
\thanks{MSC2000: 54H25, 20F10}}

\maketitle
\abs

\section{Introduction}
Let $G$ and $H$ be finitely generated free groups, and $\phi, \psi: G \to H$ be
homomorphisms. The group $H$ is partitioned into the set of
\emph{doubly-twisted conjugacy classes} as follows: $u, v \in H$ are in the
same class (we write $[u] = [v]$) if and only if there is some $g\in G$
with
\[ u = \phi(g) v \psi(g)^{-1}. \]

Our principal motivation for studying doubly-twisted conjugacy is Nielsen
coincidence theory (see \cite{gonc05} for a survey), the study of
the coincidence set of a pair of mappings and the minimization of this
set while the mappings are changed by homotopies. Our focus on free
groups is motivated specifically by the problem of computing \emph{Nielsen
classes} of coincidence points for pairs of mappings $f, g: X \to Y$,
where $X$ and $Y$ are compact surfaces with boundary. 

A necessary
condition for two coincidence points to be combined by a homotopy
(thus reducing the total number of coincidence points) is that they
belong to the same Nielsen class. (Much
of this theory is a direct generalization of similar techniques in
fixed point theory, see \cite{jian83}.) The number of ``essential''
Nielsen classes is called the Nielsen number, and is a lower bound for
the minimal number of coincidence points when $f$ and $g$ are allowed
to vary by homotopies. 

In our setting, deciding when two coincidence points are in the
same Nielsen class is equivalent to solving a natural doubly-twisted
conjugacy problem in the fundamental groups, using the induced
homomorphisms given by the pair of mappings. 
Thus the Nielsen classes of coincidence points correspond to
twisted conjugacy classes in $\pi_1(Y)$. 

The problem of computing doubly-twisted conjugacy classes in free groups is
nontrivial, even in the singly-twisted case which arises in fixed
point theory, where $\phi$ is an endomorphism and $\psi$ is the identity.
Existing techniques for computing doubly-twisted conjugacy are
adapted from singly-twisted methods using abelian and nilpotent
quotients \cite{stae07b}. Our
main result is not adapted from singly-twisted methods: it
is suited specifically for doubly-twisted conjugacy and in fact can
never apply in the case where $\psi = \id$.

In Section \ref{mainsection} we will present a
remnant condition which can be used to show that two words are in different
doubly-twisted conjugacy classes. In Section \ref{genericsection} we
show in fact that this 
remnant condition is very common for ``most'' homomorphisms. In the
sense of asymptotic density, we show that if
the homomorphisms $\phi, \psi$, and elements $u,v$ are all chosen at
random, then $[u]\neq [v]$ with probability 1.

The author would like to thank Robert F. Brown, Benjamin Fine, Armando
Martino and Enric Ventura for many helpful comments.

\section{A remnant condition for doubly-twisted conjugacy}\label{mainsection}

Given homomorphisms $\phi, \psi: G \to H$, the
\emph{equalizer subgroup} $\Eq(\phi, \psi) \le G$ is the subgroup
\[ \Eq(\phi, \psi) = \{ g \in G \mid \phi(g) = \psi(g) \}. \]
Our first lemma is an equalizer version of a result for singly-twisted
conjugacy which appears in the proof of Theorem 1.5 of \cite{bmmv06}.

Let $\langle z\rangle$ be the free group generated by $z$, and let
$\hat G = G * \langle z \rangle$ and $\hat H = H * \langle z \rangle$, with $*$
the free product. Let $h^v = v^{-1} h v$, and $\phi^v(g) =
v^{-1}\phi(g)v$. The following lemma holds when $G$ and $H$ are any
groups (not necessarily free):

\begin{lem}\label{doublemv}
Given $u \in H$, let $\hat \phi_u: \hat G \to \hat H$ be the extension
of $\phi$ given by $\hat\phi_u(z) = uzu^{-1}$, and let $\hat\psi: \hat
G \to \hat H$ be the extension of $\psi$ given by $\hat\psi(z) = z$. 

Then $[v]=[u]$ if and only if there is some $g \in G$ with $gzg^{-1} \in
\Eq(\hat \phi_u^v, \hat \psi)$.
\end{lem}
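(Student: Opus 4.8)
The plan is to translate the equalizer condition $gzg^{-1}\in\Eq(\hat\phi_u^v,\hat\psi)$ into a single equation in the free product $\hat H=H*\langle z\rangle$, and then use the normal form theorem for free products to show that this equation collapses exactly to the defining relation for $[v]=[u]$.

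First I would compute the two sides. Since $\hat\phi_u$ restricts to $\phi$ on $G$ and sends $z\mapsto uzu^{-1}$, while $\hat\psi$ restricts to $\psi$ and fixes $z$, the conjugated homomorphism $\hat\phi_u^v$ satisfies
\[
  \hat\phi_u^v(gzg^{-1}) = v^{-1}\phi(g)\,uzu^{-1}\,\phi(g)^{-1}v,
  \qquad
  \hat\psi(gzg^{-1}) = \psi(g)\,z\,\psi(g)^{-1}.
\]
Writing $p=v^{-1}\phi(g)u$ and $q=\psi(g)$, both elements of $H\subseteq\hat H$, the condition $gzg^{-1}\in\Eq(\hat\phi_u^v,\hat\psi)$ becomes simply $pzp^{-1}=qzq^{-1}$.

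Second --- and this is the one place where more than bookkeeping happens --- I would argue that $pzp^{-1}=qzq^{-1}$ with $p,q\in H$ forces $p=q$. The equation says that $q^{-1}p$ commutes with $z$ in $\hat H$; but $q^{-1}p\in H$, and by the normal form theorem for free products the only element of $H$ commuting with $z$ is the identity (for $h\neq 1$ the reduced words $hz$ and $zh$ have distinct leading syllables). Hence the equalizer condition is equivalent to $v^{-1}\phi(g)u=\psi(g)$, i.e.\ $v=\phi(g)u\psi(g)^{-1}$.

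Finally I would quantify over $g$: the existence of some $g\in G$ with $gzg^{-1}\in\Eq(\hat\phi_u^v,\hat\psi)$ is, by the previous step, exactly the existence of some $g\in G$ with $v=\phi(g)u\psi(g)^{-1}$, which is the definition of $[v]=[u]$. The main obstacle is conceptual rather than computational: one wants the free-product structure precisely so that the centralizer of $z$ is as small as possible, turning ``$pzp^{-1}$ and $qzq^{-1}$ are conjugate'' into ``$p=q$''; everything else is a direct substitution, and no freeness of $G$ or $H$ themselves enters, consistent with the remark that the lemma holds for arbitrary groups.
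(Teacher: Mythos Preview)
Your proof is correct and follows essentially the same route as the paper: both reduce the equalizer condition to the statement that an element of $H$ (your $q^{-1}p$, the paper's $u^{-1}\phi(g)^{-1}v\psi(g)$, which is its inverse) commutes with $z$ in $H*\langle z\rangle$, and then invoke the free-product structure to conclude that element is trivial. The only cosmetic difference is that you package both directions as a single chain of equivalences, whereas the paper treats the two implications separately and writes out the commutator explicitly.
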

\begin{proof}
First, assume that $[v]=[u]$, and let $g\in G$ be some element with
\[ v = \phi(g)u\psi(g)^{-1}. \]
Then we have
\begin{align*}
\hat\phi_u^v(gzg^{-1}) &= v^{-1}\phi(g)uzu^{-1} \phi(g)^{-1}v \\
&= \psi(g)u^{-1}\phi(g)^{-1} \phi(g) u z u^{-1} \phi(g)^{-1}
\phi(g)u\psi(g)^{-1} \\
&= \hat\psi(g z g^{-1})
\end{align*}
as desired.

Now assume that there is some element $gzg^{-1} \in \Eq(
\hat\phi_u^v, \hat\psi)$ for $g \in G$. Consider the commutator:
\begin{align*}
[u^{-1}\phi(g)^{-1}v\psi(g), z] 
&= u^{-1}\phi(g)^{-1}v\psi(g)z \psi(g)^{-1} v^{-1} \phi(g) u z^{-1} \\
&= u^{-1} \phi(g)^{-1} v \hat\psi(gzg^{-1}) v^{-1} \phi(g) u z^{-1} \\
&= u^{-1} \phi(g)^{-1}v \hat\phi_u^v(gzg^{-1}) v^{-1} \phi(g) u z^{-1}
\\
&= u^{-1} \phi(g)^{-1} v v^{-1} \phi(g) uzu^{-1} \phi(g)^{-1} v v^{-1}
\phi(g) u z^{-1} = 1.
\end{align*}
Thus $z$ commutes with $u^{-1}\phi(g)^{-1}v \psi(g)$, and so
$u^{-1}\phi(g)^{-1}v\psi(g) = 1$, since this word does not contain
the letter $z$. Thus $[u]=[v]$. 
\end{proof}

The above lemma is difficult to apply for the purpose of computing
twisted conjugacy classes, since the problem of computing the equalizer
subgroup of homomorphisms is difficult. In fixed point
theory (where $\psi = \id$), if $\phi$ is an automorphism, an
algorithm of \cite{masl03} is given to compute the fixed point
subgroup $\Fix(\phi)$. This algorithm relies fundamentally on 
the methods of Bestvina and Handel \cite{bh92} for representing
automorphisms of free groups using train tracks, and these techniques
do not extend in an obvious way to coincidence theory. 

Though the equalizer subgroup is in general difficult to compute, we
will show that a certain remnant property will force the equalizer
subgroup to be trivial. Remnant properties were first used by Wagner in
\cite{wagn99}.

\begin{defn}\label{remnantdef}
Let $G$ be a finitely generated free group with a specified set of
generators $\bar G = \{g_1, \dots, g_n\}$. The homomorphism $\phi$
\emph{has remnant} if for each $i$, the word
$\phi(g_i)$ has a nontrivial subword which has no cancellation in
any of the products
\[
\phi(g_j)^{\pm 1} \phi(g_i), \quad \phi(g_i)\phi(g_j)^{\pm 1},
\]
except for $j=i$ with exponent $-1$. The maximal
such noncancelling subword of $\phi(g_i)$ is called the \emph{remnant}
of $g_i$, written $\Rem_\phi (g_i)$.

We will occasionally discuss the length of the remnant
subwords, in one of two ways. 
If, for some natural number $l$, we have $|\Rem_\phi (g_i)| \ge
l$ for all $g_i$, we will say that $\phi$ has \emph{remnant length $l$}.
For some $r \in (0,1)$, we say that $\phi$ has \emph{remnant ratio
$r$} when 
\[ |\Rem_\phi (g_i)| \ge r |\phi(g_i)| \]
for each $i$.

\end{defn}

The condition that $\phi$ has remnant is slightly
weaker than saying that $\phi(\bar G)$ is Nielsen reduced (see
e.g.\ \cite{ls77}), which would make some additional assumptions on
the word length of the remnant subwords.

Throughout the rest of the paper, we will fix a particular generating
set $\bar G = \{g_1, \dots, g_n\}$ for $G$. 
Given two homomorphisms $\phi, \psi: G \to H$, there is a
homomorphism $\phi * \psi: G * G \to H$, defined as follows: Denoting $G =
\langle g_1, \dots, g_n \rangle$, we write $G*G = \langle g_1, \dots, g_n,
g_1', \dots, g_n'\rangle$. Then we define $\phi * \psi$ on the
generators of $G*G$ by $\phi*\psi(g_i) = \phi(g_i)$ and
$\phi*\psi(g_i') = \psi(g_i)$. 

We have:
\begin{lem}\label{trivialcoin}
If $\phi * \psi: G*G \to H$ has remnant, then $\phi(G) \cap \psi(G) =
\{1\}$. In particular this means that $\Eq(\phi, \psi) = \{1\}$. 
\end{lem}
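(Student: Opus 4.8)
The plan is to deduce both conclusions from a single fact: \emph{$\phi*\psi$ is injective}. First I would record the two elementary reductions. Write $G*G=\langle g_1,\dots,g_n,g_1',\dots,g_n'\rangle$ as in the paper, and for $b\in G$ let $b'\in G*G$ denote $b$ rewritten in the primed generators. If $u=\phi(a)=\psi(b)\in\phi(G)\cap\psi(G)$, then $(\phi*\psi)\bigl(a(b')^{-1}\bigr)=\phi(a)\psi(b)^{-1}=1$; injectivity gives $a(b')^{-1}=1$, and since $a$ lies in the first free factor and $b'$ in the second, free-product normal form forces $a=b=1$, hence $u=1$. Applying the same idea to $g(g')^{-1}$ for $g\in\Eq(\phi,\psi)$ (so that $(\phi*\psi)(g(g')^{-1})=\phi(g)\psi(g)^{-1}=1$) forces $g=1$, so $\Eq(\phi,\psi)=\{1\}$; alternatively, $\phi(g)=\psi(g)\in\phi(G)\cap\psi(G)=\{1\}$ and $\phi$, a restriction of the injective $\phi*\psi$ to the first factor, is injective.

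So the real content is the claim: a homomorphism $\theta\colon F\to H$ from a finitely generated free group which has remnant with respect to a basis $\{f_1,\dots,f_m\}$ is injective; I would apply this with $\theta=\phi*\psi$ and $F=G*G$, which has remnant by hypothesis. Fix a reduced word $w=f_{i_1}^{\epsilon_1}\cdots f_{i_k}^{\epsilon_k}$ with $k\ge 1$. For a generator $f_i$ and sign $\epsilon$ let $S_{i,\epsilon}$ be $\Rem_\theta(f_i)$ if $\epsilon=1$ and its inverse if $\epsilon=-1$; taking inverses of the products in Definition~\ref{remnantdef} shows $S_{i,\epsilon}$ is a nontrivial subword of $\theta(f_i)^{\epsilon}$ that is untouched by cancellation in every product $\theta(f_j)^{\delta}\theta(f_i)^{\epsilon}$ or $\theta(f_i)^{\epsilon}\theta(f_j)^{\delta}$ with $(j,\delta)\ne(i,-\epsilon)$. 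Because $w$ is reduced, every adjacent pair of factors in $\theta(f_{i_1})^{\epsilon_1}\cdots\theta(f_{i_k})^{\epsilon_k}$ falls into this good case. I would then prove by induction on $k$ that the freely reduced form of this product contains $S_{i_1,\epsilon_1},\dots,S_{i_k,\epsilon_k}$ pairwise disjointly and in order, hence is nonempty, hence $\theta(w)\ne1$. The induction hypothesis I would carry is the sharper statement that this reduced word ends exactly with $S_{i_k,\epsilon_k}$ followed by the fixed tail of $\theta(f_{i_k})^{\epsilon_k}$ to the right of its remnant; that is precisely what lets the inductive step identify the cancellation caused by appending $\theta(f_{i_{k+1}})^{\epsilon_{k+1}}$ with the purely pairwise cancellation in $\theta(f_{i_k})^{\epsilon_k}\theta(f_{i_{k+1}})^{\epsilon_{k+1}}$, and so conclude it again stops before reaching $S_{i_k,\epsilon_k}$ on the right and $S_{i_{k+1},\epsilon_{k+1}}$ on the left.

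The main obstacle is exactly this cancellation bookkeeping: a priori one fears that in a long product an interior factor gets consumed by cancellation cascading in from both sides, destroying its remnant. What makes the argument succeed is that the remnant hypothesis says precisely that cancellation between two adjacent factors never penetrates a remnant, so each remnant acts as an impassable barrier between the cancellation zones on its two sides, ruling out any cascade. The only delicate subcase — when a whole tail is consumed so that cancellation lands exactly at the boundary of a remnant — is still fine, because ``reaching the boundary and going no further'' is exactly what the hypothesis guarantees, already in the pairwise product. With injectivity of $\phi*\psi$ established, the two conclusions of the lemma follow as in the first paragraph.
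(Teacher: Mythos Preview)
Your proof is correct and follows essentially the same idea as the paper's: the remnant condition prevents full cancellation in a product $\phi(x)\psi(y)^{-1}$ with $x,y$ nontrivial. The paper's proof is a one-liner that simply asserts this non-cancellation; you supply the details by proving the slightly stronger general fact that any remnant homomorphism is injective, via the standard induction in which each remnant subword acts as a barrier to cancellation, and then derive both conclusions from injectivity of $\phi*\psi$.
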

\begin{proof}
If $\phi(G) \cap \psi(G)$ contains some nontrivial element, then we
have $x, y \in G$, both nontrivial, with 
\[ \phi(x)\psi(y)^{-1} = 1, \]
which is not possible when $\phi*\psi$ has remnant: writing $x$ and
$y$ in terms of generators will show that the left side above cannot
fully cancel.
\end{proof}

The two lemmas can be used to compute doubly-twisted conjugacy classes as in the following example:
\begin{exl}
We will examine doubly-twisted conjugacy classes of the homorphisms $\phi, \psi: G \to G$ with $G= \langle a, b \rangle$ defined by:
\[ \phi: \begin{array}{rcl}
a & \mapsto & aba \\
b & \mapsto & b^{-1}a \end{array}
\quad 
\psi: \begin{array}{rcl}
a & \mapsto & b^2a^{-1} \\
b & \mapsto & a^3 \end{array}
 \]
Given words $u, v \in G$, let $\eta_{(u,v)} = \hat \phi_u^v * \hat
\psi$. Our two lemmas together show that for $u,v \in G$, we will have
$[u] \neq [v]$ whenever $\eta_{(u,v)} : \hat G * \hat G \to \hat G$
has remnant. This remnant condition can be easily checked by hand. We
will apply this strategy for all words $u,v$ of length 0 or 1.

The homomorphism $\eta = \eta_{(1,b)}$ is as follows:
\[ \eta = \begin{array}{rcl} 
a &\mapsto & b^{-1}abab \\
b & \mapsto & b^{-2}ab \\
z &\mapsto & b^{-1}zb \\
a' & \mapsto & b^2a^{-1} \\
b' & \mapsto & a^3 \\
z' & \mapsto & z
\end{array} \]
We can see that $\eta$ has remnant, and thus by Lemma \ref{trivialcoin} that $\Eq(\hat
\phi_1^{b}, \hat \psi) = 1$, and thus by Lemma \ref{doublemv}
that $[1] \neq [b]$. 

Checking the appropriate homomorphisms (due to the asymmetry in the
role of $u$ and $v$, an unsuccessful check for the pair $(u,v)$ might
actually succeed for the pair $(v,u)$) we obtain the following additional inequalities: 
\[ [a]\neq [1],\quad [a]\neq [b],\quad [a^{-1}] \neq [1], \quad
[a^{-1}] \neq [b]. \]
This method is somewhat tedious to perform by hand. A web-based
computer implementation of the process is available for testing at the
author's website.\footnote{The technique is implemented in GAP, with a
  web-based frontend. The front-end and GAP source code are available
 at \url{http://faculty.fairfield.edu/cstaecker}.}
\end{exl}

The checks for remnant in the above example are equivalent to a
related noncancellation condition:

\begin{thm}\label{classdist}
Let $u, v \in H$ be distinct words. If $\phi^v * \psi$ has
remnant, and if, for each generator $g$ of $G*G$, the remnant words
$\Rem_{\phi^v * \psi}(g)$ do not fully cancel in any product of the form 
\begin{equation}\label{products}
(\phi^v *\psi (g)) v^{-1}u, \quad u^{-1}v (\phi^v*\psi(g)),
\end{equation}
then $[u] \neq [v]$.
\end{thm}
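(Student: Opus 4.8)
The plan is to reduce the statement to the situation of the preceding example: I will argue that the two hypotheses together force the homomorphism $\eta := \hat\phi_u^v * \hat\psi : \hat G * \hat G \to \hat H$ (the map denoted $\eta_{(u,v)}$ above) to have remnant with respect to its natural generating set $\{g_1,\dots,g_n,z,g_1',\dots,g_n',z'\}$. Granting this, Lemma \ref{trivialcoin} gives $\Eq(\hat\phi_u^v,\hat\psi)=\{1\}$, and then Lemma \ref{doublemv} gives $[u]\neq[v]$, since no element $gzg^{-1}$ (which always involves the letter $z$, hence is nontrivial) can lie in the trivial equalizer. So the entire content is the implication ``hypotheses $\Rightarrow\eta$ has remnant.''

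To verify that $\eta$ has remnant I would first observe that $\eta(g_i)=\phi^v(g_i)$ and $\eta(g_i')=\psi(g_i)$ are literally the generator images of $\phi^v*\psi$, while $\eta(z')=z$ and $\eta(z)=v^{-1}uzu^{-1}v$. Since $u\neq v$, the word $v^{-1}u$ is nonempty, so $\eta(z)$ reads in reduced form as $(v^{-1}u)z(u^{-1}v)$ with the letter $z$ in its interior; neither $\eta(z)$ nor $\eta(z')$ has a letter of $H$ adjacent to such a $z$, so $z$ is a noncancelling subword of both, and moreover cancellation in a product $\eta(x)\eta(z)^{\pm1}$ occurs only between $\eta(x)$ and the segment $v^{-1}u$, cancellation in $\eta(z)^{\pm1}\eta(x)$ only between $u^{-1}v$ and $\eta(x)$, and products against $\eta(z')^{\pm1}=z^{\pm1}$ never cancel anything. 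Consequently the products that $\eta(g_i)=\phi^v(g_i)$ and $\eta(g_i')=\psi(g_i)$ must survive are exactly those appearing in the definition of ``$\phi^v*\psi$ has remnant,'' together with (up to inverses, using $v^{-1}u=(u^{-1}v)^{-1}$) the two products in \eqref{products}: the first family is covered by the first hypothesis, the second by the second, so each generator image of $\eta$ retains a nonempty noncancelling subword.

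The delicate point --- and where I expect the real work to be --- is checking that these two families do not conspire to whittle the surviving subword down to nothing: the longest prefix of $\phi^v(g_i)$ cancelled by any product in $\eta$ is now the maximum of what the $\phi^v(g_j)^{\pm1},\psi(g_j)^{\pm1}$ strip off and what $u^{-1}v$ strips off, and similarly for suffixes, so one must make sure that deleting both enlarged pieces still leaves something. Nontriviality of $\Rem_{\phi^v*\psi}(g_i)$ and each clause of \eqref{products} handle all but the case where $u^{-1}v$ cancels a strictly longer prefix of $\phi^v(g_i)$ than any conjugated generator does while $v^{-1}u$ simultaneously cancels a strictly longer suffix than any does; to exclude this one exploits that $\phi^v(g_i)=v^{-1}\phi(g_i)v$ and that $v^{-1}u$ and $u^{-1}v$ are mutually inverse, so that the prefix and suffix of $\phi^v(g_i)$ they can remove are governed by the corresponding cancellations against the core word $\phi(g_i)$, and those cannot overlap without $\phi(g_i)$ failing to be reduced (or a generator being its own inverse). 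Writing out this case analysis cleanly, together with the analogous one for $\eta(g_i')=\psi(g_i)$, is the bulk of the argument; everything else is formal from Lemmas \ref{doublemv} and \ref{trivialcoin}.
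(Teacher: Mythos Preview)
Your strategy is exactly the paper's: show that $\eta=\hat\phi_u^v*\hat\psi$ has remnant on the generating set $\{g_1,\dots,g_n,z,g_1',\dots,g_n',z'\}$, then apply Lemma~\ref{trivialcoin} and Lemma~\ref{doublemv}. The paper's verification is organised just as you outline: $\eta(g_i)=\phi^v(g_i)$ and $\eta(g_i')=\psi(g_i)$ inherit noncancelling pieces from the hypothesis on $\phi^v*\psi$, the letter $z$ is a barrier inside $\eta(z)=v^{-1}uzu^{-1}v$ and in $\eta(z')=z$, and the only genuinely new cancellation is against the flanking segments $v^{-1}u$ and $u^{-1}v$, which is precisely what the products in \eqref{products} record.

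Where you diverge is at the ``delicate point.'' The paper does not attempt any such bridge: it simply \emph{defines} $w_i$ to be ``the subword of $\Rem_{\phi^v*\psi}(g_i)$ which has no cancellation in any of the products in \eqref{products}'' and proceeds --- that is, it reads the hypothesis as already asserting that a single subword of the remnant survives both products in \eqref{products} simultaneously. Under that reading the check is pure bookkeeping and your detour is unnecessary. Under your weaker reading (each product separately fails to annihilate the remnant), the bridge you sketch does not work: the argument exploiting $\phi^v(g_i)=v^{-1}\phi(g_i)v$ together with $(v^{-1}u)^{-1}=u^{-1}v$ has no analogue for $\eta(g_i')=\psi(g_i)$, where there is no conjugation structure, and one can in fact arrange $\psi(g_i)$ and $u,v$ so that $u^{-1}v$ strips a short prefix while $v^{-1}u$ strips a long suffix of $\Rem_{\phi^v*\psi}(g_i')$ with no letter surviving both. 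So either adopt the paper's reading of the hypothesis and drop the extra case analysis, or recognise that the weaker reading is not what the theorem intends.
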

\begin{proof}
Let $\hat G = G * \langle z \rangle$ and $\hat H = H * \langle z
\rangle$, and let $\hat\phi_u, \hat\psi: \hat G \to \hat H$ be defined as in Lemma
\ref{doublemv}. We will show that $\hat\phi_u^v * \hat\psi$ has
remnant. 

For brevity, let $\eta = \hat\phi_u^v * \hat\psi$, and let us denote
the generators of the free product so that $\hat G * \hat G = \langle g_1,
\dots, g_n, z, g_1', \dots, g_n', z' \rangle$. Then the homomorphism
$\eta$ is given by:  
\[ \eta = \hat\phi_u^v * \hat\psi: \begin{array}{rcl} 
g_i &\mapsto & \phi^v(g_i) \\
z &\mapsto & v^{-1}uzu^{-1}v \\
g'_i & \mapsto & \psi(g_i) \\ 
z' & \mapsto & z \end{array} \]

To show that $\eta$ has remnant, we must show that the words
$\eta(g_i)$ have noncancelling subwords in various products of the
form in Definition \ref{remnantdef}. For each $i$, let $w_i$ be the
subword of the remnant of $\Rem_{\phi^v*\psi}(g_i)$ which has no
cancellation in any of the products in \eqref{products}. Similarly let
$w_i'$ be the subword of the remnant of $\Rem_{\phi^v*\psi}(g_i')$
with no cancellation in any of the products in \eqref{products}. 

Let us first examine subwords of $\eta(g_i)$ in products of the
form 
\[ \eta(g_i)\eta(g_j)^{\pm 1} = \phi^v(g_i)\phi^v(g_j)^{\pm 1} \text{\quad
  or \quad} \eta(g_i)\eta(g_j')^{\pm 1} = \phi^v(g_i) \psi(g_j)^{\pm 1}. \]
In these products, $w_i$ will 
remain uncancelled (unless $j=i$ with exponent $-1$) because it is a
subword of $\Rem_{\phi^v*\psi}(g_i)$.

Now consider
\[ \eta(g_i)\eta(z)^{\pm 1} = \phi^v(g_i) v^{-1}u z^{\pm 1} u^{-1}v. \]
Here $w_i$ will remain uncancelled by the hypotheses on products of the
form in \eqref{products}, together with the fact that no cancellation
can occur with the $z^{\pm 1}$ because $u$ and $v$ do not use the
letter $z$. Finally we must consider products of the form
\[ \eta(g_i)\eta(z')^{\pm 1} = \phi^v(g_i)z, \]
in which clearly $w_i$ does not cancel. 

We have shown that $w_i$ has no cancellation in products of the form
in Definition \ref{remnantdef} involving
$\eta(g_i)$ on the left. Similar arguments will show that $w_i$ has no
cancellation in products involving $\eta(g_i)$ on the right.
Identical arguments will show that the
words $w'_i$ are uncancelled in various products involving
$\eta(g_i')$, and thus $\hat\phi_u^v*\hat\psi$ has remnant.
Since $\hat\phi_u^v * \hat\psi$ has remnant, we have $\Eq(
\hat\phi_u^v, \hat\psi) = \{1\}$ by Lemma \ref{trivialcoin}, and thus $[u]
\neq [v]$ by Lemma \ref{doublemv}.
\end{proof}

Note that Theorem \ref{classdist} cannot be used in fixed point
theory to distinguish singly-twisted conjugacy classes, since
$\phi^v * \id$ can never have remnant. 

\section{Generic properties}\label{genericsection}

A theorem of Robert F.\ Brown in \cite{wagn99} shows that ``most''
homomorphisms have remnant. Theorem 3.7 of that paper is:
\begin{lem}\label{rfblem}
Let $G$ be a free group with generators $g_1, \dots, g_n$ with $n>1$.
Given any $\epsilon > 0$, there exists some $M>0$ such that, if $\phi:
G \to G$ is an endomorphism chosen at random with $|\phi(g_i)|\le M$ for
all generators $g_i \in G$, then the probability that $\phi$ has
remnant is greater than $1-\epsilon$.
\end{lem}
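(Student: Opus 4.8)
The plan is to fix an explicit probability model---say, each $\phi(g_i)$ is chosen independently and uniformly among the reduced words of length at most $M$ in the fixed generators $g_1,\dots,g_n$---and to show that the probability $\phi$ fails to have remnant tends to $0$ as $M\to\infty$, so that every sufficiently large $M$ (hence some $M$) works. The first step is to rewrite Definition \ref{remnantdef} purely in terms of cancellation lengths. For each $i$, let $p_i$ be the largest number of leading letters of $\phi(g_i)$ that get cancelled in some product $\phi(g_j)^{\pm1}\phi(g_i)$ other than $\phi(g_i)^{-1}\phi(g_i)$, and let $s_i$ be defined symmetrically using trailing letters and the products $\phi(g_i)\phi(g_j)^{\pm1}$. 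Since cancellation in a product $xy$ affects only a suffix of $x$ and a prefix of $y$, the longest subword of $\phi(g_i)$ that survives all the relevant products is exactly the block in positions $p_i+1,\dots,|\phi(g_i)|-s_i$; hence $\Rem_\phi(g_i)$ is nontrivial precisely when $p_i+s_i<|\phi(g_i)|$, and $\phi$ has remnant if and only if this inequality holds for every $i$.

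The key input is that cancellation between ``independent'' random reduced words has a geometrically decaying tail. By a direct count of reduced words with a prescribed prefix, a uniformly random reduced word of length at most $M$ begins with a given reduced word of length $k$ with probability at most $(2n-1)^{-(k-1)}$; here the hypothesis $n>1$, which gives $2n-1\ge 3$, is essential. Now condition on the images $\phi(g_j)$ for $j\neq i$. The number of leading letters of $\phi(g_i)$ cancelled in $\phi(g_j)^{\varepsilon}\phi(g_i)$ equals the length of the longest common prefix of $\phi(g_i)$ with $\phi(g_j)^{-\varepsilon}$, so $p_i$ is the maximum of such lengths over the $2(n-1)$ now-fixed words $\phi(g_j)^{\pm1}$ with $j\neq i$, together with the single ``self-overlap'' term coming from the product $\phi(g_i)\phi(g_i)$, i.e.\ the length of the longest common prefix of $\phi(g_i)$ with its own inverse $\phi(g_i)^{-1}$. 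The fixed-word terms are controlled by the count just quoted; the self-overlap term---the event that the length-$k$ prefix of $\phi(g_i)$ is the formal inverse of its length-$k$ suffix---also has probability at most $(2n-1)^{-(k-1)}$ as long as $2k\le|\phi(g_i)|$, by exposing the letters of $\phi(g_i)$ from left to right. A union bound over these $2n-1$ contributions gives, for $k$ in that range, an estimate of the shape $\Pr[p_i\ge k]\le (2n-1)^{-(k-2)}$, and likewise for $s_i$.

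Finally one assembles the pieces. The failure event for index $i$, namely $\{p_i+s_i\ge|\phi(g_i)|\}$, is contained in $\{|\phi(g_i)|\le M/2\}\cup\{p_i\ge M/4\}\cup\{s_i\ge M/4\}$. The first event has probability exponentially small in $M$, again by counting reduced words; on its complement $2\cdot(M/4)\le|\phi(g_i)|$, so the estimate above applies and bounds the probabilities of the other two events by $C_n(2n-1)^{-M/4}$ for a constant $C_n$ depending only on $n$. Summing over $i=1,\dots,n$ gives $\Pr[\phi\text{ has no remnant}]\le C_n'(2n-1)^{-M/4}$, which is below $\epsilon$ once $M$ is large enough.

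I expect the main obstacle to be bookkeeping rather than any real difficulty. Two points genuinely need care: the product $\phi(g_i)\phi(g_i)$ is \emph{not} removed by conditioning on the other images, so it requires the separate self-overlap estimate above; and one must absorb the (small) chance that some $\phi(g_i)$ happens to be short, since the crude inequality can fail for short words. It is also worth stating explicitly which random model is meant (uniform over the ball of radius $M$, uniform over length exactly $M$, and the like); the argument adapts to any reasonable choice with no essential change.
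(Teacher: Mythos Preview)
The paper does not prove this lemma at all: it is quoted verbatim as Theorem~3.7 of \cite{wagn99} and used as a black box, and the paper then supersedes it by invoking the small-cancellation genericity results of \cite{ao96}. So there is no in-paper proof to compare your proposal against.

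That said, your direct counting argument is sound and is essentially the standard way one would prove such a statement from scratch. Your reformulation of the remnant condition as $p_i+s_i<|\phi(g_i)|$ is correct, the prefix-probability bound $(2n-1)^{-(k-1)}$ follows from the elementary count of reduced words with a prescribed prefix, and you are right to isolate the self-product $\phi(g_i)\phi(g_i)$ as the one term that survives conditioning on the other images; your treatment of it (requiring $2k\le|\phi(g_i)|$ and exposing letters sequentially) is the right fix. The final union bound over the three events $\{|\phi(g_i)|\le M/2\}$, $\{p_i\ge M/4\}$, $\{s_i\ge M/4\}$ is clean and avoids any spurious independence claims between prefix and suffix behaviour of the same word. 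Your closing caveat about specifying the probability model is well taken; the paper itself is silent on this point, and the argument is robust to the usual choices (uniform on the ball versus the sphere of radius $M$).
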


The above is the only result of its kind typically referenced in the
Nielsen theory literature, but it is in the spirit of a well
established theory of generic group properties. (See \cite{olli05} for
a survey.)

For a free group $G$ and a natural number $p$, let $G_p$ be the subset
of all words of length at most $p$. For a subset $S \subset G$, let
$S_p = S \cap G_p$. The \emph{asymptotic density} (or simply
\emph{density}) of $S$ is defined as 
\[ D(S) = \lim_{p \to \infty} \frac{|S_p|}{|G_p|}, \]
where $|\cdot|$ denotes the cardinality. The set $S$ is said to be
\emph{generic} if $D(S) = 1$.

Similarly, if $S \subset G^l$ is a set of $l$-tuples of
elements of $G$, the asymptotic density of $S$ is defined as
\[ D(S) = \lim_{p \to \infty} \frac{|S_p|}{|(G_p)^l|}, \]
where $S_p = S \cap (G_p)^l$, and
$S$ is called \emph{generic} if $D(S) = 1$.

A homomorphism on the free group $G = \langle g_1, \dots, g_n\rangle$
is equivalent combinatorially to an $n$-tuple of elements of $G$
(the $n$ elements are the words $\phi(g_i)$ for each
generator $g_i$). Thus the asymptotic density of a set of
homorphisms can be defined in the same sense as above, viewing the set of
homomorphisms as a collection of $n$-tuples. The statement of Lemma
\ref{rfblem}, then, is simply that the set of endomorphisms $G \to
G$ with remnant is generic. Similarly we can define the density of a set
of pairs of homorphisms by viewing it as a collection
of $2n$-tuples (a pair of homomorphisms is equivalent to a pair of
$n$-tuples). 

The statement of Lemma \ref{rfblem} can be strengthened
and extended easily to general homomorphisms (possibly
non-endomorphisms) using a generic property from \cite{ao96}.
Consider the setting of homomorphisms $G \to H$, where
$G$ and $H$ are finitely generated free and $H$ has more than one
generator. Lemma 3 of \cite{ao96} 
implies that for any $l$, the collection of $l$-tuples of $H$ which are
Nielsen reduced (when viewed as sets of elements of $H$) is
generic. This directly gives 
\begin{lem} If the rank of $H$ is greater than 1, then
the set of homomorphisms $G \to H$ with remnant is generic.
\end{lem}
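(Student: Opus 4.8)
The plan is to deduce the statement from the generic property extracted from \cite{ao96} (stated just above) together with the observation, already noted after Definition \ref{remnantdef}, that ``$\phi(\bar G)$ is Nielsen reduced'' is a strictly stronger requirement than ``$\phi$ has remnant.'' Recall that a homomorphism $\phi: G \to H$ is exactly the data of the $n$-tuple $(\phi(g_1), \dots, \phi(g_n)) \in H^n$, and that the density of a set of homomorphisms is defined to be the density of the corresponding set of $n$-tuples. So it suffices to show that the set $R \subseteq H^n$ of $n$-tuples $(w_1, \dots, w_n)$ whose associated homomorphism ($g_i \mapsto w_i$) has remnant is generic, and for this I would sandwich $R$ between the set $N \subseteq H^n$ of $n$-tuples whose underlying set $\{w_1,\dots,w_n\}$ is Nielsen reduced — which has density $1$ by Lemma 3 of \cite{ao96}, since the rank of $H$ exceeds $1$ — and all of $H^n$.

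The remaining work is the containment $N \subseteq R$. If $\{w_1,\dots,w_n\}$ is Nielsen reduced, then the standard Nielsen shortening inequalities let one write each $w_i = a_i m_i b_i$ with $m_i$ nontrivial, in such a way that in every reduced product $w_j^{\pm 1} w_i$ and $w_i w_j^{\pm1}$ with $(j,\varepsilon) \neq (i,-1)$ the segment $m_i$ survives uncancelled; in particular the case $j = i$ with exponent $+1$ is covered since $w_i w_i \neq 1$ does not shorten. This is precisely a remnant subword of $w_i$ in the sense of Definition \ref{remnantdef}, so $(w_1,\dots,w_n) \in R$. One also notes along the way that the degenerate tuples (some $w_i = 1$, or $w_i = w_j^{\pm1}$) form a set of density $0$, so that ``Nielsen reduced as a set'' and ``as a tuple'' agree generically. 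Finally, $N \subseteq R \subseteq H^n$ and $D(N) = 1$, so monotonicity of the counting ratios $|{\cdot}\cap (H_p)^n| / |(H_p)^n|$ forces $D(R) = 1$ as well, which is the claim.

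The only real friction is bookkeeping rather than depth: one must match the normalization of asymptotic density used in \cite{ao96} (counting over balls in $H$) with the normalization $|S_p|/|(H_p)^n|$ used here — routine, since $|(H_p)^n| = |H_p|^n$ — and one must unpack Lemma 3 of \cite{ao96}, which is phrased for tuples of elements rather than for homomorphisms. The other point needing care is confirming that the classical Nielsen-reduction inequalities genuinely produce an uncancelled subword meeting \emph{all} of the product conditions of Definition \ref{remnantdef}, including the positive-exponent self-products; this is the step I would expect a careful reader to want spelled out, though it is standard (cf.\ the treatment of Nielsen reduced sets in \cite{ls77}).
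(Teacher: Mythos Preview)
Your proposal is correct and follows exactly the paper's own argument: invoke Lemma~3 of \cite{ao96} to get that Nielsen-reduced $n$-tuples are generic, then use the observation (already made after Definition~\ref{remnantdef}) that Nielsen reduced implies remnant, so the remnant set contains a generic set and is therefore generic. The paper simply states this in one sentence without spelling out the containment or the density bookkeeping, but your expanded version is the same proof.
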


Applying the above to homomorphisms $G * G \to H$ and applying Lemma
\ref{trivialcoin} gives an interesting corollary:
\begin{cor}
If the rank of $H$ is greater than 1, then the set of pairs of
homomorphisms $\phi,\psi: G\to H$ with $\Eq(\phi, \psi) = \{1\}$ is
generic.
\end{cor}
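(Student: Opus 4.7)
The plan is to reduce the corollary directly to the previous lemma on genericity of remnant, using the construction $\phi * \psi : G * G \to H$ introduced before Lemma \ref{trivialcoin}. The key observation is that a pair of homomorphisms $\phi, \psi : G \to H$ is the same combinatorial data as a single homomorphism $G * G \to H$: if $G$ has generators $g_1, \dots, g_n$ then $G * G$ has $2n$ generators, and specifying a homomorphism from $G * G$ to $H$ is the same as specifying the $2n$-tuple $(\phi(g_1), \dots, \phi(g_n), \psi(g_1), \dots, \psi(g_n)) \in H^{2n}$. Under this identification, the asymptotic density of a set of pairs $(\phi, \psi)$ (defined in the excerpt as the density of $2n$-tuples) agrees exactly with the asymptotic density of the corresponding set of homomorphisms $G * G \to H$.

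With this identification in place, the first step is to invoke the preceding lemma: since $H$ has rank greater than $1$, the set of homomorphisms $G * G \to H$ with remnant is generic. Translating back through the bijection above, the set of pairs $(\phi, \psi)$ such that $\phi * \psi$ has remnant is generic.

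The second step is to apply Lemma \ref{trivialcoin}: whenever $\phi * \psi$ has remnant, we have $\Eq(\phi, \psi) = \{1\}$. Thus the generic set produced in the first step is contained in the set of pairs with trivial equalizer, and since any superset of a generic set is generic (the density is bounded above by $1$ and below by the density of the subset), the set of pairs with $\Eq(\phi, \psi) = \{1\}$ is itself generic.

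I do not anticipate a real obstacle; the only point that deserves a sentence of care is the compatibility of the two density definitions under the bijection $(\phi, \psi) \leftrightarrow \phi * \psi$, since the excerpt defines genericity for pairs of homomorphisms and for single homomorphisms separately. Once that compatibility is noted, the corollary follows in two lines from the preceding lemma and Lemma \ref{trivialcoin}.
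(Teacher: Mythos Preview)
Your proposal is correct and follows exactly the paper's approach: the corollary is stated immediately after the lemma with the remark ``Applying the above to homomorphisms $G*G \to H$ and applying Lemma \ref{trivialcoin} gives an interesting corollary,'' which is precisely your two-step argument. Your added care about the compatibility of the two density definitions is a reasonable clarification, but the paper treats this as obvious and gives no further detail.
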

This gives a somewhat counterintuitive result: If $F_i$ is the free
group of rank $i$, a pair of homomorphisms from $F_{1000}$ to $F_2$
will generically have images whose intersection is trivial. (See
results of a similar spirit in \cite{mtv08}, e.g.\ that homomorphisms
of free groups are generically injective but not surjective.)

The cited result from \cite{ao96} is quite a bit stronger: it is shown that
the set of subsets of $G$ having small cancellation property $C'(\lambda)$
is generic for any $\lambda > 0$. This gives stronger results
concerning remnant properties of generic homomorphisms:
\begin{lem}\label{genericlem} Let the rank of $H$ be greater than 1. Then:
\begin{itemize}
\item For any natural number $l$, the set of homomorphisms
  $\phi:G \to H$ with remnant length $l$ is generic.
\item For any $r \in (0,1)$, the set of homomorphisms $\phi: G \to H$
  with remnant ratio $r$ is generic.
\end{itemize}
\end{lem}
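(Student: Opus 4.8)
The plan is to deduce both statements from the genericity of the small cancellation property $C'(\lambda)$ proved in \cite{ao96}, using the standard dictionary that identifies a homomorphism $\phi\colon G\to H$ with the $n$-tuple $(\phi(g_1),\dots,\phi(g_n))\in H^n$, where $n$ is the rank of $G$. I will combine three generic sets, using the elementary fact that a finite intersection of generic sets is generic (a union bound on the complementary densities). Let $\mathcal D$ be the set of $\phi$ for which the $2n$ words $\phi(g_1)^{\pm1},\dots,\phi(g_n)^{\pm1}$ are pairwise distinct: for $i\neq j$ the tuples with $\phi(g_i)=\phi(g_j)^{\pm1}$, and for each $i$ the tuples with $\phi(g_i)=1$, form a vanishing fraction of $(H_p)^n$ as $p\to\infty$, so $\mathcal D$ is generic. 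For a natural number $m$, let $\mathcal L_m$ be the set of $\phi$ with $|\phi(g_i)|\ge m$ for all $i$; its complement is contained in the union of the $n$ events $|\phi(g_i)|<m$, each of density $0$ since the fraction is $|H_{m-1}|/|H_p|\to 0$, so $\mathcal L_m$ is generic. Finally, for $\lambda\in(0,1)$, let $\mathcal S_\lambda$ be the set of $\phi$ for which $\{\phi(g_1),\dots,\phi(g_n)\}$ satisfies $C'(\lambda)$; this is generic by \cite{ao96} (here is where the hypothesis that the rank of $H$ exceeds $1$ enters).

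It is convenient to prove the second bullet first. The key claim is: given $r\in(0,1)$ and $\lambda=(1-r)/2$, every $\phi\in\mathcal D\cap\mathcal S_\lambda$ has remnant ratio $r$. Fix a generator $g_i$ and a product of the form in Definition \ref{remnantdef} with $\phi(g_i)$ on the right, say $\phi(g_j)^{\pm1}\phi(g_i)$ with the pair (index, exponent) not equal to the excluded $(i,-1)$. The subword of $\phi(g_i)$ deleted by free reduction in this product is exactly the longest common prefix of $\phi(g_i)$ with the inverse of the left factor $\phi(g_j)^{\pm1}$; this inverse is again one of $\phi(g_1)^{\pm1},\dots,\phi(g_n)^{\pm1}$, and since $\phi\in\mathcal D$ these $2n$ words are distinct, so the inverse of the left factor equals $\phi(g_i)$ \emph{only} for the pair $(i,-1)$ --- which is exactly the pair excluded in Definition \ref{remnantdef}, being the only one that causes total cancellation. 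Hence for every admissible product the deleted prefix is a common prefix of two distinct elements of the symmetrised set of $\{\phi(g_k)\}$, i.e.\ a piece; being a prefix of $\phi(g_i)$, property $C'(\lambda)$ forces its length to be less than $\lambda|\phi(g_i)|$. The mirror-image argument (the excluded product now being $\phi(g_i)\phi(g_i)^{-1}$) bounds the suffix of $\phi(g_i)$ deleted in any product $\phi(g_i)\phi(g_j)^{\pm1}$ by $\lambda|\phi(g_i)|$. Unwinding Definition \ref{remnantdef}, $\Rem_\phi(g_i)$ is $\phi(g_i)$ with its longest cancellable prefix and longest cancellable suffix removed, and since $\lambda<\tfrac12$ prevents these from overlapping, $|\Rem_\phi(g_i)|>(1-2\lambda)|\phi(g_i)|=r|\phi(g_i)|$. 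As $\mathcal D\cap\mathcal S_\lambda$ is generic, the second bullet follows.

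For the first bullet, fix a natural number $l$, apply the second bullet with $r=\tfrac12$ to see that the set of $\phi$ with remnant ratio $\tfrac12$ is generic, and intersect it with $\mathcal L_{2l}$. Every $\phi$ in this generic intersection satisfies $|\Rem_\phi(g_i)|\ge\tfrac12|\phi(g_i)|\ge l$ for all $i$, hence has remnant length $l$; so the set of such $\phi$ is generic.

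The step I expect to demand the most care is the product-by-product bookkeeping in the second paragraph: checking that every deleted segment is a genuine piece and that the sole exception (the total cancellation occurring exactly in $\phi(g_i)^{-1}\phi(g_i)$ and $\phi(g_i)\phi(g_i)^{-1}$) is precisely the pair excluded in Definition \ref{remnantdef} --- this is where membership in $\mathcal D$ is used. A secondary technicality is that $C'(\lambda)$ is ordinarily stated for sets of cyclically reduced words, whereas a random $\phi(g_i)$ fails to be cyclically reduced with probability bounded away from $0$; one can accommodate this by intersecting further with the (routinely generic) set of $\phi$ for which the portion of each $\phi(g_i)$ lost under cyclic reduction has length $\le\lambda|\phi(g_i)|$, applying $C'(\lambda)$ to the cyclic reductions, and absorbing the loss by replacing $1-2\lambda$ with $1-c\lambda$ for a fixed constant $c$, which changes neither conclusion.
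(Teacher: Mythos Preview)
Your proposal is correct and follows exactly the approach the paper intends: the paper does not write out a proof of this lemma at all, but simply asserts that it follows from the genericity of the small cancellation condition $C'(\lambda)$ established in \cite{ao96}. You have supplied the details of that deduction --- bounding cancelled prefixes and suffixes by pieces, choosing $\lambda=(1-r)/2$, and handling the remnant-length case via the remnant-ratio case intersected with a minimum-length condition --- and your treatment of the technicalities (distinctness of the $2n$ words, non-overlap of prefix and suffix since $\lambda<\tfrac12$, the cyclic-reduction caveat) is appropriate.
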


We include the above lemma for the sake of completeness, but we will
not actually require its full strength in order to prove our generic
property for doubly-twisted conjugacy.

\begin{thm}
Let $G$ and $H$ be free groups, with the rank of $H$ greater than 1. 
Then the set
\[ S = \{ (\phi,\psi, u,v) \mid [u]\neq [v] \} \]
is generic.
\end{thm}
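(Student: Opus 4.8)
The plan is to show that, for large $p$, all but a vanishing fraction of the tuples $(\phi,\psi,u,v)\in H_p^{2n+2}$ satisfy the hypotheses of Theorem \ref{classdist}; since any such tuple lies in $S$, this forces $D(S)=1$.

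Fix a small $\epsilon>0$, say $\epsilon=1/10$. Call a tuple $(\phi,\psi,u,v)\in H_p^{2n+2}$ \emph{$p$-good} if: (i) each of the $2n+2$ words $u$, $v$, $\phi(g_1),\dots,\phi(g_n)$, $\psi(g_1),\dots,\psi(g_n)$ has length at least $(1-\epsilon)p$; (ii) $u\neq v$; and (iii) in every product $w_1^{\pm1}w_2^{\pm1}$, with $w_1,w_2$ ranging over those $2n+2$ words and only the trivially collapsing products $w^{+1}w^{-1}$, $w^{-1}w^{+1}$ excluded, at most $\epsilon p$ letters cancel. The first step is to check that the fraction of $p$-good tuples tends to $1$. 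Because $\mathrm{rank}(H)>1$, the number of reduced words of length $\ell$ in $H$ grows exponentially with growth rate $2\,\mathrm{rank}(H)-1>1$; hence words much shorter than $p$ make up an exponentially small fraction of $H_p$, and for each of the finitely many products in condition (iii) the fraction of tuples exhibiting cancellation of length $\ge\epsilon p$ there is exponentially small (this is a prescribed coincidence among $\Theta(p)$ letters of essentially independent random words). Summing these finitely many bad events proves the claim.

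The second step is to verify that a $p$-good tuple, with $p$ large, meets both hypotheses of Theorem \ref{classdist}. Since $v$ is long and cancels little against each $\phi(g_i)$, the reduced form of $\phi^v(g_i)=v^{-1}\phi(g_i)v$ is $\alpha_i\mu_i\beta_i$, where $\mu_i$ is the central subword of $\phi(g_i)$ left after deleting at most $\epsilon p$ letters from each end, and $\alpha_i,\beta_i$ are long subwords of $v^{-1}$ and $v$. Let $\nu_i$ be the shorter central subword of $\phi(g_i)$ left after deleting $2\epsilon p$ letters from each end, and let $\nu_i'$ be the analogue inside $\psi(g_i)$. A product-by-product check shows that $\nu_i$ is never cancelled in the products required by Definition \ref{remnantdef} for $\phi^v*\psi$: a product of two $\phi^v$-images collapses in the middle to $v^{-1}\phi(g_i)\phi(g_j)^{\pm1}v$, which shaves at most $\epsilon p$ letters off each end of $\phi(g_i)$; a product of a $\phi^v$-image with a $\psi(g_j)$-image, such as $v^{-1}\phi(g_i)v\,\psi(g_j)^{\pm1}$, disturbs only the $v$-collar, again by at most $\epsilon p$ letters. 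Symmetrically $\nu_i'$ survives all products required of $\psi(g_i)$. Hence $\phi^v*\psi$ has remnant, with $\nu_i\subseteq\Rem_{\phi^v*\psi}(g_i)$ and $\nu_i'\subseteq\Rem_{\phi^v*\psi}(g_i')$. For the second hypothesis, the products in \eqref{products} reduce --- via the $vv^{-1}$ and $v^{-1}v$ bridges --- to $v^{-1}\phi(g_i)u$, $u^{-1}\phi(g_i)v$, $\psi(g_i)v^{-1}u$, and $u^{-1}v\psi(g_i)$; in each, $\nu_i$ (respectively $\nu_i'$) lies at least $2\epsilon p$ letters away from whichever end meets $u$ or $v$, so it survives, and in particular $\Rem_{\phi^v*\psi}(g)$ does not fully cancel. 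Theorem \ref{classdist} now gives $[u]\neq[v]$, so every $p$-good tuple lies in $S$, and $D(S)=1$.

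The technical core is the product-by-product bookkeeping of the second step; the one delicate case is a product of two $\phi^v$-images, where the long $v$-collar of one factor is eaten away and one must confirm that the cancellation halts after that collar plus at most $\epsilon p$ further letters (coming from the $\phi(g_i)\phi(g_j)^{\pm1}$ overlap) --- which is exactly why $\nu_i$ trims $2\epsilon p$ rather than just $\epsilon p$. Small values of $\mathrm{rank}(G)$ need no separate handling: for trivial $G$ the construction degenerates and Theorem \ref{classdist} becomes the statement that distinct words lie in distinct classes. Note that this argument does not use the sharper genericity results of Lemma \ref{genericlem}.
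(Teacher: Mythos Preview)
Your proof is correct and follows essentially the same route as the paper's. Both arguments show that Theorem~\ref{classdist} applies whenever the $2n+2$ words $\phi(g_1),\dots,\phi(g_n),\psi(g_1),\dots,\psi(g_n),u,v$ satisfy a small-cancellation condition, and that this condition is generic. The paper packages this more tersely: it observes that your ``$p$-good'' hypothesis is exactly (a quantitative form of) the statement that the single homomorphism $\phi*\psi*u*v:F_{2n+2}\to H$ has remnant, verifies the fourteen products of Theorem~\ref{classdist} under that hypothesis, and then invokes the Arzhantseva--Ol'shanskii/Brown lemma to conclude genericity. Your version unpacks both halves --- you argue the probabilistic estimate from scratch rather than quoting the lemma, and you track the surviving central subwords $\nu_i,\nu_i'$ explicitly rather than naming them as remnant words of $\phi*\psi*u*v$. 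The bookkeeping in your second step checks out (including the delicate $\phi^v(g_i)\phi^v(g_j)^{\pm1}$ case, where in fact a single $\epsilon p$ trim already suffices; your $2\epsilon p$ is harmless overcaution). The upshot is the same proof, yours more self-contained, the paper's more compact.
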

\begin{proof}
We will slightly extend our free-product notation for homomorphisms as
follows: for a homomorphism $\phi: G\to H$ and a word $w \in H$, let
$\langle x \rangle$ be the free group generated by some new letter
$x$. Then define $\phi * w: G * \langle x \rangle
\to H$ by $\phi*w (g_i) = \phi(g_i)$ for $g_i$ a generator of $G$, and
$\phi*w(x) = w$.

By Theorem \ref{classdist}, $S$ contains all tuples $(\phi, \psi, u,
v)$ such that $\phi^v * \psi * uv^{-1}$ has remnant. This remnant
condition will be satisfied when the bracketed words below have
subwords which do not cancel in any of the following products (except
those which are trivial):
\newcommand{\formy}[1]{[#1]}
\[
\begin{array}{ccc}
\formy{\phi^v(g_i)}(\phi^v(g_j))^{\pm 1},
\quad  &\formy{\phi^v(g_i)}\psi(g_j)^{\pm1},
\quad  &\formy{\phi^v(g_i)}(uv^{-1})^{\pm1}, \\
(\phi^v(g_j))^{\pm 1} \formy{\phi^v(g_i)},
  &\psi(g_j)^{\pm1}\formy{\phi^v(g_i)},
  &(uv^{-1})^{\pm1}\formy{\phi^v(g_i)},\\
\formy{\psi(g_i)}(\phi^v(g_j))^{\pm 1},
&\formy{\psi(g_i)}\psi(g_j)^{\pm1},
&\formy{\psi(g_i)}(uv^{-1})^{\pm1}, \\
(\phi^v(g_j))^{\pm 1}\formy{\psi(g_i)},
&\psi(g_j)^{\pm1}\formy{\psi(g_i)},
&(uv^{-1})^{\pm1}\formy{\psi(g_i)}, \\
\formy{uv^{-1}}(\phi^v(g_j))^{\pm 1},
&\formy{uv^{-1}}\psi(g_j)^{\pm1}.
&
\end{array}
\]

It can be verified that there will be noncanceling subwords in
the bracketed parts above when $\phi*\psi*u*v$ has remnant.
We will verify the first and last of these: 

In the product
\[ \formy{\phi^v(g_i)} (\phi^v(g_j))^{\pm 1} =
   [v^{-1}\phi(g_i)v]v^{-1}\phi(g_j)^{\pm 1}v, \]
if $\phi*\psi*u*v$ has remnant, then a portion of $\phi(g_i)$ will
remain uncanceled.
Now consider the product:
\[ \formy{uv^{-1}} \psi(g_j)^{\pm1}. \]
Again, if $\phi*\psi*u*v$ has remnant, then a portion of $u$ will
remain uncanceled. It is easy
to check that various remnant words of $\phi*\psi*u*v$ are similarly uncanceled
in the other of the 14 products above.

Thus $S$ contains the set of tuples $(\phi, \psi, u, v)$ such that
$\phi*\psi*u*v$ has remnant. But this set of tuples is generic by
Lemma \ref{genericlem}, since a choice of a tuple with $\phi*\psi*u*v$
having remnant is combinatorially equivalent to choosing a single
homomorphism $\eta: F_{2n+2} \to H$, where $F_{2n+2}$ is the free
group on $2n+2$ generators. Since $S$ contains a generic set, it is
itself generic.
\end{proof}

\end{document}